\newtheorem{theorem}{Theorem}[section]
\newtheorem{corollary}[theorem]{Corollary}
\newtheorem{lemma}[theorem]{Lemma} 
\theoremstyle{remark}
\newtheorem{remark}[theorem]{Remark}
\numberwithin{equation}{section}
\renewenvironment{proof}[1][Proof]{\medskip\noindent\textit{#1 ---\;}}{\\ 
    \null\  \hfill\textbf{Q.E.D.}\medskip}
\newcommand{\ie}{\textit{i.e.}\xspace}
\newcommand{\R}{\mathbb{R}}
\newcommand{\hyp}[1]{$(\mathbf{H#1})$}
\newcommand{\nhyp}[1]{\noindent $(\mathbf{H#1})$}
\newcommand{\gyp}[1]{$(\mathbf{G#1})$}
\newcommand{\ds}{\,\mathrm{d}s}
\newcommand{\dsigma}{\,\mathrm{d}\sigma}
\begin{document}
    \title[On the regularizing effect for Hamilton-Jacobi equations]{On the
        regularizing effect for unbounded solutions of first-order
        Hamilton-Jacobi equations}

\author{Guy Barles}
\address{Guy Barles\newline
    Laboratoire de Math\'ematiques et Physique Th\'eorique (UMR CNRS 7350)\newline
    F\'ed\'eration Denis Poisson (FR CNRS 2964)\newline
    Universit\'e F. Rabelais - Tours\newline
    Parc de Grandmont\newline
    37200 Tours, France}
\email{guy.barles@lmpt.univ-tours.fr}
\urladdr{http://www.lmpt.univ-tours.fr/$\sim$barles}

\author{Emmanuel Chasseigne}
\address{Emmanuel Chasseigne\newline
    Laboratoire de Math\'ematiques et Physique Th\'eorique (UMR CNRS 7350)\newline
    F\'ed\'eration Denis Poisson (FR CNRS 2964)\newline
    Universit\'e F. Rabelais - Tours\newline
    Parc de Grandmont\newline
    37200 Tours, France}
\email{emmanuel.chasseigne@lmpt.univ-tours.fr}
\urladdr{http://www.lmpt.univ-tours.fr/$\sim$manu}

\begin{abstract}
We give a simplified proof of regularizing effects for  first-order Hamilton-Jacobi Equations of the form $u_t+H(x,t,Du)=0$ in $\R^N\times(0,+\infty)$ in the case where the idea is to first estimate $u_t$. As a consequence, we have a Lipschitz regularity in space and time for coercive Hamiltonians and, for hypo-elliptic Hamiltonians, we also have an H\"older regularizing effect in space following a result of L. C. Evans and M. R. James.
\end{abstract}

 \subjclass{35F21, 35D35 , 35D40}

\keywords{First-order Hamilton-Jacobi Equations, viscosity solutions, regularizing effects.}

\maketitle

\section{Introduction}\label{sect.intro}

In this short paper we give a new proof of regularizing effects for
Hamilton-Jacobi Equations 
\begin{equation}\label{HJ}
  u_t+H(x,t,Du)=0 \quad \hbox{in  } \R^N\times(0,\infty)\,,
\end{equation}
in the case when the aim is to estimate $u_t$ first. This new proof is inspired
by ideas introduced in \cite{BS} and then simplified in \cite{BIM}; a precise
comparison between the results and ideas of \cite{BS,BIM} and ours is provided
just after the statement of the main results of this article, at the end of
Section~\ref{amr}. More classical proofs can be found in \cite{Reg} but  with
stronger assumptions and
more tedious proof.

The model equations we have in mind are
\begin{equation}\label{me1}
  u_t+|A(x,t)Du|^m=f(x,t) \quad \hbox{in  } \R^N\times(0,\infty)\,,
\end{equation}
where $f$ is a continuous (typically bounded from below) function and $A$ takes values, in the
set of $N\times N$ symmetric matrices. For such equations, we consider two
cases: the {\em coercive case} for which $A$ is invertible and, as a consequence
$|A(x,t)Du|^m \to + \infty$ as $|p|\to +\infty$, and the {\em non-coercive case}
where $A$ may be degenerate. In both cases, we provide regularizing effects for
{\em bounded from below solutions}. The main improvement in the assumptions is
easy to describe in the coercive case since we just require that $A,f$ are
continuous in $x$ (no uniform continuity assumptions) and, in particular, $f$
may have some growth at infinity. In the non-coercive case, analogous results
hold except that we have to impose far more restrictive assumptions on the
$t$-dependence of the equation.

Of course, for (\ref{me1}), the equation implies that $u_t \leq f(x,t)$ in $\R^N\times(0,\infty)$ and thefore we just need an estimate from below for $u_t$.

To do so, our approach consists in using the exponential transform, $v:=-\exp(-u)$. Notice
that provided $u$ is bounded below (then we can always assume that $u$ is
nonnegative), we get that $v$ is bounded since $-1\leq v\leq 0$. Moreover, $v$
solves a new Hamilton-Jacobi equation 
\begin{equation}\label{eq:HJB.v}
    v_t+ G(x,t,v,Dv)=0\,,\quad \text{with }
     G(x,t,v,p):=-vH\Big(x,t,-\frac{p}{v}\Big)\,.
\end{equation}
In order to estimate $v_t$, a key property (as in all the regularizing effects
proofs) is to have a large enough, positive $G_v$ when $v_t=-G$ is large (but
negative) i.e. when $G$ is large and positive. This leads to an assumption on
$(H_p\cdot p-H)(x,t,p)$ which is classical except that, here, this quantity has
to be large when $H(x,t,p)$ is large, and not when $|p|$ is large as it is
classical for the estimate on~$Du$.

The paper is organized as follows : in Section~\ref{amr}, we state our main
results, one in the case where $H$ is assumed to be coercice, one in the
degenerate case and we give and we deduce full regularizing effects,
\textit{i.e.} in space and time. The proofs of the main theorems are 
are given in Section~\ref{proofs}. Then, in Section~\ref{var}, we treat
several explicit examples. We have put some technical results about Hamiltonian
$G$ in an appendix.

\

\noindent\textit{Acknowledgement --} 
Both authors were partially supported by the french ANR project WKBHJ (Weak KAM beyond Hamilton-Jacobi), ANR-12-BS01-0020.

\section{Assumptions and main results}\label{amr}

\subsection{Assumptions}

In order to state and prove our results, we use several structure conditions,
which all rely on the following fundamental hypothesis\\

\nhyp{0}
\textit{  The function $H$ is continuous in $\R^N \times [0,T] \times\R^N$ and
    there exists $c_0=c_0(H)\geq0$ such that 
\begin{itemize}
    \item[\textbullet] $H$ is locally
         Lipschitz in the $p$-variable, in a neighborhood of the set
         $\{(x,t,p);\ H(x,t,p)\geq c_0\}$;
         \smallskip
    \item[\textbullet] there exists a continuous, increasing function 
        $\phi:[c_0,\infty[\to[0,\infty[$ such that for some $A>c_0$, 
    \begin{equation}\label{int.cond}
        \int_{A}^{+\infty} \frac{1}{s\phi(s)}\ds < +\infty
    \end{equation}
    and for almost all $x,p\in\R^N$, $t \in [0,T]$,
    $$
  (H_p\cdot p-H)(x,t,p)\geq\phi\big(H(x,t,p)\big)\quad \hbox{a.e. in
  }\{(x,t,p);\ H(x,t,p)\geq c_0\} \,;
  $$
  \item[\textbullet] there exists $\kappa>0$ such that 
    $$|H_p(x,t,p)| \leq \kappa.(H_p\cdot p-H)(x,t,p)\quad 
    \hbox{a.e. in  }\{(x,t,p);\ H(x,t,p)\geq c_0\} \,.$$
\end{itemize}
}

\smallskip

We first point out that, for the model equation (\ref{me1}), \hyp{0}
is satisfied with $c_0=0$ and $\phi(s)=(m-1)s$ by homogeneity, both in
the coercive and non-coercive cases. 

\begin{remark}
Roughly speaking, \hyp{0} is a superlinearity condition on $H$ either in the $p$-variable or a linear function of $p$ (possibly depending on $x,t$ and possibly degenerate). Indeed, it is easy to show that if $H(x,t,p)$ satisfies \hyp{0} then $H(x,t,a(x,t)p)$ also satisfies \hyp{0} for any bounded matrix-valued function $a$. We also refer to Section~\ref{var} for various examples. Even if we were not able to prove it in full generality, Condition~\eqref{int.cond} should be automatically satisfied for convex and superlinear Hamiltonians.
\end{remark}

The first main consequence of \nhyp{0} is
the following: if we set
$$
\forall \tau\in(c_0,\infty)\,,\ F(\tau) 
:=2\int_\tau^\infty\frac{\dsigma}{\sigma\phi(\sigma)} \quad\text{and}\quad
\forall s\in(0,F(c_0))\,,\ \eta(s)=F^{-1}(s)\,,
$$
then $\eta'(s)=-\eta(s)\phi(\eta(s))/2$, 
$s\mapsto\eta(s)$ is decreasing and $\eta(0^+)=+\infty$. Notice that if
$c_0=0$, since $\phi(s)\sim \phi(0)s$ near $s=0$, the integral blows-up and 
$F(0)=\infty$. Similarly, if $\phi(c_0)=0$ we have $F(c_0)=+\infty$.

\

The next assumption concerns the dependence of $H$ in $t$. In order to formulate
it, we notice that $\eta\mapsto 2\eta+\phi(2\eta)$ is increasing from $(c_0,\infty)$
    to $(2c_0+\phi(2c_0),\infty)$. Hence, its inverse, denoted by 
$\psi:(2c_0+\phi(2c_0),\infty)\to(c_0,\infty)$ 
is well-defined and also increasing. Notice that $\psi$ is sublinear but
$\psi(\tau)\to\infty$ as $\tau\to\infty$.

\nhyp{1} \textit{There exists $c_1>0$
    such that the function $H$ is locally Lipschitz in $t$ in a neighborhood of the
set $\{(x,t,p);\ H(x,t,p)\geq c_1\}$ 
and for any $x,p\in\R^N$, $t \in [0,T]$, $$|H_t
(x,t,p)| \leq \psi\big(H(x,t,p)\big)(H_p\cdot p-H)(x,t,p) \quad \hbox{a.e. in  }
\{(x,t,p);\ H(x,t,p)\geq c_1\} \,.$$
}

For the model equation (\ref{me1}), \nhyp{1} is satisfied in the coercive case
as soon as $A$ is Lipschitz continuous in $t$, while, in the non-coercive case,
it imposes rather restrictive (or at least non-natural assumptions) since $A$
should satisfy $|A_t(x,t)p| \leq k |A(x,t)p|.(|A(x,t)p|^m)$ for any $x,t,p$ such
that $|A(x,t)p|^m\geq c_1$, a property which imposes particular types of
degeneracies (see Section~\ref{var} for a study of the scalar case). However
\hyp{1} is satisfied with $c_1=0$ if $A$ does not depend on $t$.

We then define precisely the coercive case: $H$ is coercive if

\nhyp{2} \textit{$H(x,t,p)\to +\infty$ when $|p| \to + \infty$, uniformly for
    $(x,t)$ in a compact subset of $\R^N\times (0,T]$.}\\

And we have to impose some additional restrictions on the behavior of $H$ in $x$
in the non-coercive case \ie when \nhyp{2} does not hold.\\

\nhyp{3} \textit{There exists $c_2>0$ such that the
    function $H$ is locally Lipschitz in $x$ in a neighborhood of the
set $\{(x,t,p);\ H(x,t,p)\geq c_2\}$ and there
exists a constant $\gamma>0$ such that, for any $x,p\in\R^N$, $t \in [0,T]$, 
$$|H_x(x,t,p)|  \leq \gamma (|p|+1)(H_p\cdot p-H)(x,t,p) 
\quad \hbox{a.e. in  }\{(x,t,p);\ H(x,t,p)\geq c_2\} \,.$$}

For the model equation (\ref{me1}), \nhyp{2} is satisfied (in the non-coercive
case) as soon as $A$ is Lipschitz continuous in $x$.

\subsection{The main theorem}

In order to state our main results, we first introduce 
$$c_*:=\begin{cases}\max(c_0,c_1) & \text{if \hyp{0}-\hyp{1}-\hyp{2} holds}\,,\\
    \max(c_0,c_1,c_2) & \text{if \hyp{0}-\hyp{1}-\hyp{3} holds}\, .
\end{cases}
$$
and we set  
$$t_*=t_*(H):=\begin{cases}\eta^{-1}(c_*)=F(c_*) & \text{if } c_*>0\,,\\
    +\infty & \text{if } c_*=0\, .
\end{cases}
    $$
Notice that this is consistent with the fact that $F(0^+)=+\infty$.

\begin{theorem}\label{thm:main}
We assume that either \hyp{0}-\hyp{1}-\hyp{2} or \hyp{0}-\hyp{1}-\hyp{3}
holds and let $u$ be a continuous, bounded from below solution of \eqref{HJ}.
Then for any $t\in(0,t_*)$ and
$x\in\R^N$, the function $v:=-\exp(-u)$ satisfies
    \begin{equation}\label{ineq:main}
        \inf_{0\leq s\leq t}\big\{ 
            v(x,t)-v(x,s)+(t-s)\eta(s)\,
            \big\}\geq 0\,.
    \end{equation}
\end{theorem}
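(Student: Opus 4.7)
The strategy is to argue by contradiction and double variables, applied to the transformed equation \eqref{eq:HJB.v} for $v=-\exp(-u)$. Recall that $v$ is bounded ($-1\leq v\leq 0$ since we may assume $u\geq 0$) and the definition of $\eta$ gives $\eta(0^+)=+\infty$ together with $\eta>0$ decreasing. Suppose \eqref{ineq:main} fails: there exist $(x_0,t_0)\in\R^N\times(0,t_*)$, $s_0\in[0,t_0]$ and $\theta>0$ with $v(x_0,t_0)-v(x_0,s_0)+(t_0-s_0)\eta(s_0)\leq-\theta$. This expression equals $0$ at $s_0=t_0$ and tends to $+\infty$ as $s_0\to 0^+$, so necessarily $s_0\in(0,t_0)$.

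To access the PDE at both times $t_0$ and $s_0$, I would introduce the doubled test function
\[\Phi(x,y,\tau,s):=v(x,\tau)-v(y,s)+(\tau-s)\eta(s)+\frac{|x-y|^2}{\epsilon^2}+\frac{(\tau-t_0)^2}{\epsilon^2}+\delta\bigl(\sqrt{1+|x|^2}+\sqrt{1+|y|^2}\bigr)\]
on $\R^N\times\R^N\times[0,t_0]\times[0,t_0]$. The $\delta$-term restores coercivity in $(x,y)$ and the $\epsilon^{-2}$-penalties force $\bar x\approx\bar y$ and $\bar\tau\approx t_0$. For $\epsilon,\delta$ sufficiently small, $\inf\Phi<0$ and is attained at some $(\bar x,\bar y,\bar\tau,\bar s)$; the blow-up of $\eta$ at $0$ and the fact that $\Phi$ is $o(1)$ on the diagonal $\bar\tau=\bar s=t_0$ rule out $\bar s\in\{0,t_0\}$, so $\bar s\in(0,t_0)$.

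Writing the viscosity inequalities obtained from the $(x,\tau)$- and $(y,s)$-slices of $\Phi$, with $p_\epsilon:=2(\bar x-\bar y)/\epsilon^2$, produces
\begin{align*}
G\bigl(\bar x,\bar\tau,v(\bar x,\bar\tau),-p_\epsilon+O(\delta)\bigr) &\geq \eta(\bar s)+2(\bar\tau-t_0)/\epsilon^2 && (\text{supersolution}),\\
G\bigl(\bar y,\bar s,v(\bar y,\bar s),-p_\epsilon+O(\delta)\bigr) &\leq \eta(\bar s)-(\bar\tau-\bar s)\eta'(\bar s) && (\text{subsolution}).
\end{align*}
Both right-hand sides are of order $\eta(\bar s)$, so by \hyp{0} applied to $G$ (which inherits the relevant structure from $H$, as presumably established in the appendix) one has $G_p\cdot p-G\geq\phi(G)$ at each test point. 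Subtracting the two inequalities and bounding the Hamiltonian difference $G(\bar x,\bar\tau,v_1,p)-G(\bar y,\bar s,v_2,p)$ via chain-rule estimates on $G_x,G_t,G_v$ inherited from \hyp{0}--\hyp{1} and either \hyp{2} or \hyp{3}---bounds of the form $|G_x|+|G_t|+|vG_v|\lesssim (G_p\cdot p-G)\,\Psi(G)$ for an appropriate $\Psi$---one obtains a differential inequality that, when combined with the defining ODE $\eta'(\bar s)=-\eta(\bar s)\phi(\eta(\bar s))/2$, is violated in the limit $\delta\to 0$ then $\epsilon\to 0$. This furnishes the contradiction.

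The principal technical obstacle, and the main distinction between the coercive case \hyp{2} and the non-coercive case \hyp{3}, lies in controlling the cross term $|\bar x-\bar y|\cdot|p_\epsilon|\sim|\bar x-\bar y|^2/\epsilon^2$ that appears via the spatial Lipschitz bound on $G$. Under \hyp{2}, combining the supersolution inequality with coercivity of $H$ yields a uniform bound on $|p_\epsilon|$, forcing $|\bar x-\bar y|=o(1)$ and rendering the cross term harmless. Under \hyp{3} no such a priori gradient bound is available, and the cross term must instead be absorbed using the extra factor $(G_p\cdot p-G)$ appearing in the bound on $H_x$, together with a careful two-scale passage $\delta\to 0$ followed by $\epsilon\to 0$. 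The integral condition \eqref{int.cond} enters essentially through the ODE for $\eta$: it guarantees both $\eta(0^+)=+\infty$ (so the minimum is interior) and the closing of the differential inequality at the end of the argument.
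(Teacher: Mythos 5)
Your setup (contradiction, exponential transform, doubling with localization penalties, viscosity inequalities at the two points, structure of $G$ from the appendix) matches the paper's framework, but the actual engine of the proof is missing: everything after ``subtracting the two inequalities'' is asserted rather than proved, and that is precisely the hard part. The structural bounds \gyp{0}--\gyp{1}--\gyp{3} only hold on the set $\{G\geq c_*\}$, and the bound $|G_t|\leq\psi(G)\,G_v$ is only useful if one knows $\psi(G(\xi_\sigma))\leq\eta(\bar s)/2$, i.e. $G(\xi_\sigma)\leq\eta(\bar s)+\phi(\eta(\bar s))$, at \emph{every} point $\xi_\sigma$ of the interpolation segment between the two test points, not just at the endpoints. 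The paper devotes Lemma~\ref{lem:G.increasing} and the following lemma to a monotonicity/bootstrap argument establishing exactly the two-sided control $\eta(s)\leq G(\xi_\sigma)\leq\eta(s)+\phi(\eta(s))$ on the whole segment; without it your ``chain-rule estimates'' cannot be applied (a priori $G$ could leave the region where the hypotheses hold). Equally essential, and absent from your sketch, is the quantitative use of the negated inequality itself: the minimality gives $v(\bar y,\bar s)-v(\bar x,\bar \tau)\geq(\bar\tau-\bar s+\beta)\eta(\bar s)+\dots+\delta$, and it is the product $G_v(\xi_\sigma)\cdot(v_2-v_1)\geq G_v(\xi_\sigma)\big[(\bar\tau-\bar s)\eta(\bar s)+\delta\big]$, combined with $G_v\geq\phi(\eta(s))$ and the ODE $\eta'=-\eta\phi(\eta)/2$, that cancels the $\eta'$ term and absorbs the $G_t$, $G_p$ and (in the non-coercive case) $G_x$ error terms. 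Your claimed bound ``$|G_x|+|G_t|+|vG_v|\lesssim(G_p\cdot p-G)\Psi(G)$'' neither has the right form nor identifies this absorption mechanism, so the ``differential inequality that is violated'' is not actually obtained.

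Two further points in your construction would need repair. First, the time-doubling penalty $(\tau-t_0)^2/\epsilon^2$ produces the term $2(\bar\tau-t_0)/\epsilon^2$ in the supersolution inequality, which is only $O(1/\epsilon)$ and of uncontrolled sign, so your claim that both right-hand sides are of order $\eta(\bar s)$ fails; the paper avoids this by keeping the first time variable fixed at $t$ and using the standard fact that the supersolution inequality holds up to the terminal time. Second, your limit order (``$\delta\to0$ then $\epsilon\to0$'') does not make the cross term $|\bar x-\bar y|^2/\epsilon^2$ small: in the non-coercive case one must work at fixed localization and let $\epsilon\to0$ so that this term is $o_\epsilon(1)$ and can be absorbed by the margin coming from the contradiction hypothesis (the paper's $Q_2$ versus $Q_1$), while in the coercive case the gradient bound comes from the \emph{subsolution} inequality (upper bound on $H$) plus \hyp{2}, not from the supersolution one. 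As it stands, the proposal reproduces the scaffolding of the paper's proof but leaves a genuine gap where the contradiction is supposed to be derived.
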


Notice that since $u$ and $v$ are continuous functions, $u_t$ and $v_t$ are
defined in the sense of distributions. A distributional formulation of
\eqref{ineq:main} is the following: $v_t\geq -\eta(t)$ in $\R^N\times(0,t_*)$.
Similarly, we have $u_t\geq -\eta(t) u$ on the same set.

\subsection{Consequences}

The general statement of this theorem may hide some non-trivial consequences
concerning global estimates, at least in some particular cases. First, if
$c_*>0$ and we want to get an estimate for $t>t_*$, we can apply the estimate of
the theorem to $w(x,s):=u(x, s+t-t_*/2)$ which is a solution of an analogous
(just shifted in time) pde for $s \in (0,T-t+t_*/2)$. This interval corresponds
to the interval $(t-t_*/2,T)$ for $u$. Hence the
\begin{corollary} If $c_* > 0$, setting $C=\eta(t_*/2)$ we have
$$ v_t \geq - \max(\eta (t), C) \quad \hbox{and} \quad u_t \geq
-\max\big(\eta(t),C\big) u\quad \hbox{a.e.  in }\R^N \times (0,T)\; .$$
\end{corollary}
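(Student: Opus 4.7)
The plan is to reduce the corollary to Theorem~\ref{thm:main} by a simple time-translation argument, exploiting the fact that the structural hypotheses \hyp{0}--\hyp{3} are formulated pointwise in $t$ with structural constants uniform on $[0,T]$. For $t\in(0,t_*/2]$, I would apply Theorem~\ref{thm:main} directly: since $\eta$ is decreasing on its natural domain, $\eta(t)\geq\eta(t_*/2)=C$ on this subinterval, so $\max(\eta(t),C)=\eta(t)$ and the conclusion of the theorem already reads $v_t\geq-\eta(t)=-\max(\eta(t),C)$ there.

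For the remaining regime $t_0\in(t_*/2,T)$, the key step will be the time-shift already suggested in the preamble of the corollary. I would introduce $w(x,s):=u(x,s+t_0-t_*/2)$, which solves $w_s+\tilde H(x,s,Dw)=0$ on $\R^N\times(0,T-t_0+t_*/2)$ with $\tilde H(x,s,p):=H(x,s+t_0-t_*/2,p)$. Since the constants $c_0,c_1,c_2$ and the function $\phi$ (hence also $\eta$, $\psi$, $c_*$, $t_*$) do not depend on time in the formulation of \hyp{0}--\hyp{3}, the shifted Hamiltonian $\tilde H$ will satisfy those same hypotheses with identical constants, so Theorem~\ref{thm:main} applies to $w$ as well and yields $\tilde v_s\geq-\eta(s)$ distributionally on $\R^N\times(0,\min(t_*,T-t_0+t_*/2))$, with $\tilde v:=-\exp(-w)$. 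I would then evaluate at $s=t_*/2$, which lies in this interval as soon as $t_0<T$, to obtain $\tilde v_s(x,t_*/2)\geq-\eta(t_*/2)=-C$; since $\tilde v(x,s)=v(x,s+t_0-t_*/2)$, this transfers via the chain rule to $v_t(x,t_0)\geq-C$, and letting $t_0$ vary in $(t_*/2,T)$ gives $v_t\geq-C$ on the whole strip.

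Combining the two regimes, and using the harmless convention of extending $\eta$ by any value bounded above by $C$ beyond its natural domain, would produce exactly the stated bound $v_t\geq-\max(\eta(t),C)$ on $\R^N\times(0,T)$. The corresponding estimate for $u_t$ then follows from the pointwise identity obtained by differentiating $v=-\exp(-u)$ in time, exactly as it is used just after Theorem~\ref{thm:main} to pass from $v_t\geq-\eta(t)$ to $u_t\geq-\eta(t)u$. The only delicate point in this scheme is the bookkeeping verification that $\tilde H$ inherits \hyp{0}--\hyp{3} with the same constants as $H$; but this will be transparent from the pointwise-in-$(x,t,p)$ character of those assumptions together with the uniformity of the structural constants across $[0,T]$.
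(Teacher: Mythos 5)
Your proposal is correct and follows essentially the same route as the paper: for $t$ beyond $t_*/2$ one applies Theorem~\ref{thm:main} to the time-shifted function $w(x,s)=u(x,s+t-t_*/2)$ and reads off the bound at $s=t_*/2$, which is exactly the argument sketched in the paragraph preceding the corollary. Your additional remarks (checking that the shifted Hamiltonian satisfies \hyp{0}--\hyp{3} with the same constants, and handling the range $t\leq t_*/2$ directly) are just the bookkeeping the paper leaves implicit.
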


If $T=+\infty$, the previous estimate gives the lower estimate $u_t\geq -C u$
for $t$ big enough. However, if $c_*=0$ then $t_*=\infty$ and we get a much better
asymptotic estimate
\begin{corollary}\label{cor}
    If $c_*=0$ and $T=+\infty$, 
$$ v_t \geq -\eta (t) \quad \hbox{and} \quad u_t \geq
-\eta(t) u\quad \hbox{a.e.  in }\R^N \times (0,\infty)\; .$$
In particular,  $(u_t)^-\to 0$ locally uniformly in $\R^N$ as $t\to\infty$.
\end{corollary}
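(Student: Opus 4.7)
The plan is to read the corollary as the direct specialization of Theorem~\ref{thm:main} to the case $c_*=0$, complemented by the asymptotic behaviour of $\eta$ at infinity.

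The first step is to check that $c_*=0$ forces $t_*=F(0)=+\infty$, which is precisely the observation recorded just after the definition of $F$: when $c_0=0$ the integrand $1/(s\phi(s))$ is not integrable at $0$, so the integral defining $F(c_*)$ diverges. Thus Theorem~\ref{thm:main} applies for every $t>0$, and the two differential inequalities $v_t\geq-\eta(t)$ and $u_t\geq-\eta(t)\,u$ a.e.\ in $\R^N\times(0,\infty)$ are nothing but the distributional reformulation of \eqref{ineq:main} recorded just after the theorem.

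For the final assertion, the crucial observation is that $\eta(t)\to 0$ as $t\to\infty$. Indeed, $F(\tau)=2\int_\tau^\infty d\sigma/(\sigma\phi(\sigma))$ is continuous, strictly decreasing and tends to $0$ at $+\infty$ by the integrability condition \eqref{int.cond}, so its inverse $\eta$ satisfies $\eta(t)\searrow c_0=0$. Assuming $u\geq 0$ (permissible by adding a constant, since $u$ is bounded from below), the inequality $u_t\geq-\eta(t)\,u$ reads $(u_t)^-(x,t)\leq \eta(t)\,u(x,t)$, and the locally uniform vanishing of $(u_t)^-$ reduces to a local upper bound on $u$ as $t\to\infty$.

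That local upper bound is the only point I expect to require a separate argument. For the concrete model equations \eqref{me1} it is painless, since $u_t\leq f$ gives an at most linear-in-$t$ bound on compact $x$-sets, easily killed by the decay of $\eta$. In the abstract framework of Theorem~\ref{thm:main} one would argue in the same spirit, extracting a locally uniform a priori bound on $u$ from the equation $u_t=-H(x,t,Du)$ and the structural lower bound on $H$ implicit in the assumptions (for $t$ large, $\eta(t)$ is already arbitrarily small, so even a slow growth of $u$ is admissible), after which $\eta(t)\sup_{K}u(\cdot,t)\to 0$ closes the proof.
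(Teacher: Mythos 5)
Your treatment of the two displayed inequalities follows exactly the route the paper intends: $c_*=0$ forces $t_*=F(0)=+\infty$ because $1/(s\phi(s))$ is not integrable at $0$, Theorem~\ref{thm:main} then applies on all of $(0,\infty)$, and the inequalities are just the distributional reformulation of \eqref{ineq:main} stated right after the theorem. Your argument that $\eta(t)\to 0$ as $t\to\infty$ is also fine: for every $\varepsilon>0$, \eqref{int.cond} makes $F(\varepsilon)$ finite, and $\eta(t)\le\varepsilon$ as soon as $t\ge F(\varepsilon)$.

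The problem is the final assertion, and your proposed repair does not close it. You reduce $(u_t)^-\to 0$ to a local upper bound on $u$, and claim that for the model equation the bound $u_t\le f$, giving at most linear growth in $t$ on compact $x$-sets, is ``easily killed by the decay of $\eta$''. It is not: in the model case $\phi(s)=(m-1)s$, so $\eta(t)$ is exactly of order $1/t$ (see Example 1), and a linearly growing $u$ gives $\eta(t)\sup_K u(\cdot,t)=O(1)$, not $o(1)$. Linear growth is moreover the generic large-time behavior ($u\simeq -\lambda t+\phi(x)$ in the ergodic regime), so the pointwise estimate $(u_t)^-\le\eta(t)\,u$ -- and a fortiori the estimate $(u_t)^-\le\eta(t)\,e^{u}$ which is what differentiating $v=-e^{-u}$ literally produces -- cannot yield $(u_t)^-\to 0$ unless $u$ stays bounded (or at least is $o(1/\eta(t))$) on compact $x$-sets as $t\to\infty$. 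That boundedness is not a consequence of the standing assumptions: your general-case sketch, extracting an a priori bound from $u_t=-H(x,t,Du)$ and a lower bound on $H$, can again only give linear-in-$t$ growth, which we just saw is insufficient. The missing input is precisely the situation the paper has in mind when it compares with \cite{BS,BIM} (the ergodic normalization $\lambda=0$, where $u$ is expected to remain bounded) and when it warns that Corollary~\ref{cor} ``can only hold in particular situations''. So either you must add, and prove, a locally uniform-in-time bound on $u$ (after which $(u_t)^-\le\eta(t)\sup_K u\to 0$ is immediate), or the last sentence of the corollary remains unjustified by your argument.
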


We first point out, as we already do it after the statement of \hyp{1}, that
$c_*=0$ implies in particular $c_1=0$, which is a rather restrictive assumption
in the non-coercive case. We come back on this point in Section~\ref{var}.

This asymptotic result leads us to the comparison with the results of
\cite{BS,BIM} which concern cases where $H$ does not depend on $t$. The main
issue in these papers was the asymptotic behavior of $u$ which is expected to be
of ergodic type, namely 
$$ u(x,t) = - \lambda t + \phi(x) + o_t(1)\quad
\hbox{when  } t\to +\infty\; ,$$
where $\lambda$ is the ergodic constant and $\phi$ a solution of the ergodic
problem $$ H(x,D\phi) = \lambda \quad \hbox{in  }\R^N\; .$$ In \cite{BS,BIM},
the key idea is to reduce to the case when $\lambda = 0$ and then to show that
$(u_t)^- \to 0$ (or $(u_t)^+ \to 0$) locally uniformly in $\R^N$ as
$t\to\infty$. To do so, one has to take into account the fact that $\lambda =0$
and $\phi$ plays a key role since it interferes in the analogue of \hyp{0}. With
all these ingredients, the proofs of \cite{BS,BIM} are slightly more
sophisticated that ours. But we point out that, remarking that an analogous
proof could provide regularizing effects is (to the best of our knowledge)
completely new.

This analysis through the ergodic behavior of $u$ also shows that Corollary~\ref{cor} can only hold in particular situations.

%

\subsection{Gradient estimates and spatial regularity}

In the \textit{coercive case}, we can deduce a gradient estimate from
\eqref{ineq:main}. Indeed, Theorem~\ref{thm:main} implies
$$ H(x,t,Du) \leq -u_t \leq \eta(t) u\; ,$$
and standard arguments in viscosity solutions' theory show that $u$ is locally
Lipschitz continuous. Moreover, a local estimate of $Du$ follows from the above
inequality.

In the non-coercive case we studied above (Example 3), the bound on $u_t$
implies a local estimate on $|A(x,t)Du|$ and using a result of Evans and
James~\cite{EJ}, we can deduce a local $C^{0,\alpha}$ bound when $A$ satisfies
some ``hypoelliptic'' type assumptions.

\section{Proof of Theorem \ref{thm:main}}\label{proofs}
In all the proof, we fix $0<t< t_*$ where we recall that $t_*=\eta^{-1}(c_*)$.
Assuming that for some $x_0\in\R^N$, inequality
(\ref{ineq:main}) does not hold, we deduce that for $\alpha,\beta,\epsilon >0$ small
enough with $\epsilon\ll \alpha,\beta$, we have also
\begin{equation}\label{ineq:min}
    \min_{t\geq s\geq 0,\ x,y\in\R^N}\big\{ 
        v(x,t)-v(y,s)+(t-s)\eta(s)+\frac{|x-y|^2}{\epsilon^2}+\alpha|y|^2+\beta\eta(s)\,
            \big\}\leq-\delta\,,
\end{equation}
for some $\delta>0$ which does not depend on $\alpha,\beta,\epsilon$.
Notice first that this min is attained because of the $\alpha$-term, and it
cannot be attained at $s=0$ because of the $\beta$-term. For simplicity, we
still denote by $x,y,s,t$ the variables for which the min is attained. 
Since $v$ is bounded, we have the usual estimates $|x-y|\leq O(\epsilon)$, $|y|=o(\alpha^{-1/2})$.
    
Next, the equation for $v$ given in \eqref{eq:HJB.v} involves a new Hamiltonian
    $G$ whose properties follow from the assumptions on $H$ (see the proof in
    Appendix): if $G(x,t,v,p)\geq c_*$,
    $$\begin{aligned} 
        \text{\hyp{0} implies} \quad
        \text{\gyp{0}}:\ & G_v (x,t,v,p)\geq \phi(G(x,t,v,p))\text{ and }
        |G_p(x,t,v,p)| \leq \kappa\, G_v(x,t,v,p)\,,\\
        \text{\hyp{1} implies}\quad \text{\gyp{1}}:\
        & |G_t (x,t,v,p)|\leq \psi(G(x,t,v,p))\,
        G_v(x,t,v,p)\,,\\
        \text{\hyp{3} implies}\quad \text{\gyp{3}}:\
        & |G_x(x,t,v,p)| \leq \gamma (|p|+1) G_v(x,t,v,p)\,.
    \end{aligned}$$

Using the supersolution inequality for $v(x,t)$ in the viscosity sense, and the
subsolution for $v(y,s)$ we get, with $p_\epsilon=-2(x-y)/\epsilon^2$,
\begin{equation*}
    \begin{aligned}
    -\eta(s) + G\big(x,t,v(x,t),p_\epsilon\big) & \geq0\,,\label{(i)} \\
    -\eta(s) +(t-s+\beta)\eta'(s) + G\big(y,s,v(y,s),p_\epsilon+ 2\alpha y\big)
    & \leq0\,.
\end{aligned}
\end{equation*}

We begin with the {\em coercive} case, assuming \hyp{0}-\hyp{1}-\hyp{2}.
We remark that, if $\alpha$ and $\beta$
remain fixed, then $x,y$ stay in a compact subset of $\R^N$ and $t,s$ in a
compact subset of $(0,t_*)$ and we can let $\epsilon$ tend to zero. By the uniform
continuity of $H$ and $v$ on such compact sets, we end up with
\begin{equation}\label{ineq:visco}
    \begin{aligned}
    -\eta(s) + G\big(x,t,v(x,t),p\big) & \geq0\,, \\
    -\eta(s) +(t-s+\beta)\eta'(s) + G\big(x,s,v(x,s),p+2\alpha x\big) & \leq
    0\,,
\end{aligned}
\end{equation}
for some $p\in\R^N$.
In order to estimate the difference of the hamiltonians, we denote by
$v_1=v(x,t),\ v_2=v(x,s)$, $q_\alpha=2\alpha x$ and
$\xi_\sigma:=(x,t,v_1,p)+\sigma(0,s-t,v_2-v_1,q_\alpha)$. 

The following arguments rely on the next two lemmas, where $s$ and $t$ are
fixed, and $\sigma$ is the running variable. 

\begin{lemma}\label{lem:G.increasing}
    Let $0\leq\sigma_1<\sigma_2\leq1$ such that
    \begin{equation}\label{ineq:claim}
    \eta(s)\leq G(\xi_\sigma) \leq \eta(s)+\phi(\eta(s))\quad
    \text{for any } \sigma \in [\sigma_1,\sigma_2]\,.
\end{equation}
Then, the function $\sigma\mapsto G(\xi_\sigma)$ is increasing on
$[\sigma_1,\sigma_2]$.
\end{lemma}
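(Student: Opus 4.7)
The plan is to show that along the segment $\sigma\mapsto\xi_\sigma$, the function $G$ is differentiable almost everywhere in $\sigma$ (since $G$ is locally Lipschitz in $(t,v,p)$ on $\{G\geq c_*\}$), and that its derivative is strictly positive whenever $G(\xi_\sigma)$ sits in $[\eta(s),\eta(s)+\phi(\eta(s))]$. Integrating on $[\sigma_1,\sigma_2]$ then yields the announced monotonicity.

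First I would compute the chain rule: since $\xi_\sigma$ is affine in $\sigma$ with velocity $(0,s-t,v_2-v_1,q_\alpha)$,
$$\frac{d}{d\sigma}G(\xi_\sigma)=(s-t)G_t(\xi_\sigma)+(v_2-v_1)G_v(\xi_\sigma)+q_\alpha\cdot G_p(\xi_\sigma)\,.$$
The range assumption together with $s\leq t<t_*=\eta^{-1}(c_*)$ and the monotonicity of $\eta$ gives $G(\xi_\sigma)\geq\eta(s)\geq c_*$, so \gyp{0}-\gyp{1} apply. Using $G_v\geq\phi(G)>0$, $|G_p|\leq\kappa G_v$ and $|G_t|\leq\psi(G)G_v$, I factor out $G_v(\xi_\sigma)$ to obtain
$$\frac{d}{d\sigma}G(\xi_\sigma)\geq G_v(\xi_\sigma)\Bigl[(v_2-v_1)-(t-s)\psi(G(\xi_\sigma))-\kappa|q_\alpha|\Bigr]\,.$$

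The key step is bounding the bracket from below. From the minimization \eqref{ineq:min} (after sending $\epsilon\to 0$ so that $y=x$),
$$v_2-v_1=v(x,s)-v(x,t)\geq(t-s)\eta(s)+\beta\eta(s)+\alpha|x|^2+\delta\,.$$
For the $\psi$-term, the monotonicity of $\phi$ and the range hypothesis give $G(\xi_\sigma)\leq\eta(s)+\phi(\eta(s))\leq 2\eta(s)+\phi(2\eta(s))$, so by the very definition of $\psi$ as the increasing inverse of $r\mapsto 2r+\phi(2r)$, one gets $\psi(G(\xi_\sigma))\leq\eta(s)$, whence $(t-s)\eta(s)-(t-s)\psi(G(\xi_\sigma))\geq 0$. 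Finally, Young's inequality yields $\kappa|q_\alpha|=2\kappa\alpha|x|\leq\alpha|x|^2+\kappa^2\alpha$. Combining all three estimates collapses the bracket to a lower bound $\delta+\beta\eta(s)-\kappa^2\alpha$, which is strictly positive as soon as $\alpha$ is chosen small enough (recalling that $\delta>0$ is a fixed constant independent of $\alpha,\beta,\epsilon$).

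The main delicate point is verifying that the upper bound $\eta(s)+\phi(\eta(s))$ on $G$ is strong enough to control $\psi(G)$ by $\eta(s)$; this works precisely because of the doubling structure built into the definition of $\psi$, which is itself tailored to absorb the $(t-s)\psi(G) G_v$ term coming from $G_t$ against the $(v_2-v_1)G_v$ term forced by the minimization. With positivity of the $\sigma$-derivative established a.e., integrating on $[\sigma_1,\sigma_2]$ concludes that $\sigma\mapsto G(\xi_\sigma)$ is (strictly) increasing there.
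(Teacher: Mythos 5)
Your proof is correct and follows essentially the same route as the paper: differentiate $\sigma\mapsto G(\xi_\sigma)$ along the segment, use \eqref{ineq:min} to bound $v_2-v_1$ from below, and invoke \gyp{0}--\gyp{1} to absorb the $G_t$ and $G_p$ contributions, choosing $\alpha$ small. The only (harmless) differences are bookkeeping: the paper halves the $G_v(v_2-v_1)$ term and uses the sharper identity $\psi(\eta(s)+\phi(\eta(s)))=\eta(s)/2$ together with a case distinction on $|x|\lessgtr 2\kappa$, whereas you use the cruder bound $\psi(G(\xi_\sigma))\leq\eta(s)$ and Young's inequality, absorbing the remainder into $\delta$.
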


\begin{proof}
We argue as if $G$ were $C^1$ (otherwise a standard mollification argument
allows to reduce to this case) and compute 
$$ \frac{\mathrm{d}}{\dsigma}\big[G(\xi_\sigma)\big] =G_t(\xi_\sigma)(s-t)+
  G_v(\xi_\sigma)(v_2-v_1)+G_p(\xi_\sigma)\cdot q_\alpha\,.
$$
Inequality \eqref{ineq:min} yields (recall that we have let $\epsilon$
tend to $0$) 
\begin{equation}\label{ineq:v2.v1}
    v_2-v_1\geq
(t-s+\beta)\eta(s)+\alpha|x|^2+\delta\,,
\end{equation}
and therefore, as long as $G(\xi_\sigma) \geq \eta(s)$, we have
$G_v(\xi_\sigma)\geq 0$ and 
$$
  \frac{\mathrm{d}}{\dsigma}\big[G(\xi_\sigma)\big] \geq
  G_t(\xi_\sigma)(s-t)+
  G_v(\xi_\sigma)\big((t-s+\beta)\eta(s)+\alpha|x|^2+\delta\big)
  +G_p(\xi_\sigma)\cdot q_\alpha \,.
$$
Denoting 
$$\begin{aligned}
    I_1 & := G_t(\xi_\sigma)(s-t)+ \frac12 G_v(\xi_\sigma)(t-s)\eta(s)\,,\\
    I_2 & := \frac12 G_v(\xi_\sigma)\alpha|x|^2-G_p(\xi_\sigma)\cdot q_\alpha\,, \\
    I_3 & := \frac12 G_v(\xi_\sigma)\delta\,,
\end{aligned}$$
we have to estimate these three terms, {\em as long as $\eta(s)\leq 
    G(\xi_\sigma)\leq\eta(s)+\phi(\eta(s))$}.
    Recall that since $\eta(s)\geq \eta(t_*)=c_*$, as long as
    $G(\xi_\sigma)\geq\eta(s)$, we have $G(\xi_\sigma)\geq c*$ and we can apply
    the hypotheses on $G$.

    First,
$$
I_1\geq \frac12 (t-s)G_v(\xi_\sigma) \left [ \eta(s)-
    2\frac{|G_t(\xi_\sigma)|}{G_v(\xi_\sigma)} \right ] \,,
$$
and by \gyp{1}, $|G_t(\xi_\sigma)|/G_v(\xi_\sigma)\leq
\psi(G(\xi_\sigma))\leq\psi\big(\eta(s)+\phi(\eta(s))\big)=\eta(s)/2$, the last
equality coming from the definition of $\psi$. Hence $I_1\geq0$ on
$[\sigma_1,\sigma_2]$.

Concerning $I_2$, we have
$$
  I_2\geq \frac12 \alpha G_v(\xi_\sigma)\Big[\, |x|^2 
  -2\frac{\big|G_p(\xi_\sigma)\big|}{G_v(\xi_\sigma)}
  |x|\Big]\geq \frac12 \alpha G_v(\xi_\sigma)\Big[\, |x|^2 
  -2\kappa
  |x|\Big]\,.
$$
Using \gyp{0} we remark that the quantity in the bracket is positive if
$|x|>2\kappa$ and therefore we are left with considering the case when
$|x|\leq 2\kappa$. But in this case, $I_2$
is estimated by $4\alpha\kappa^2 G_v(\xi_\sigma)$ and 
$I_3$ controls this term since $ \delta \geq 4\alpha\kappa^2$
for $\alpha$ small enough, so that $I_3+I_2>0$.

In any case, $I_1+I_2+I_3>0$ on $[\sigma_1,\sigma_2]$ and we deduce that, if
$t_*$ is chosen small enough and $\alpha$ is small enough,
$\frac{\mathrm{d}}{\dsigma}\big[G(\xi_\sigma)\big]>0$ for
$\sigma\in[\sigma_1,\sigma_2]$.
\end{proof}

The next step shows that we can apply Lemma~\ref{lem:G.increasing} on the
whole interval $\sigma_1=0$ and $\sigma_2=1$:
\begin{lemma} For any $\sigma \in [0,1]$
$$
    \eta(s)\leq G(\xi_\sigma) \leq \eta(s)+\phi(\eta(s))\; .
$$
\end{lemma}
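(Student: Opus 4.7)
The plan is to combine the viscosity inequalities at the two endpoints $\sigma=0,1$ with Lemma~\ref{lem:G.increasing} through a connectedness/bootstrap argument. The supersolution inequality in \eqref{ineq:visco} immediately gives $G(\xi_0)\geq\eta(s)$. For the other endpoint, substituting $\eta'(s)=-\eta(s)\phi(\eta(s))/2$ into the subsolution inequality rewrites it as
$$
G(\xi_1)\;\leq\;\eta(s)+(t-s+\beta)\,\frac{\eta(s)\phi(\eta(s))}{2}.
$$
Choosing $t_*$ and $\beta$ small enough that $(t-s+\beta)\eta(s)/2<1$, we obtain the strict bound $G(\xi_1)<\eta(s)+\phi(\eta(s))$.

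Next I would propagate the lower bound to the whole interval. Let $\sigma^{*}:=\sup\{\sigma\in[0,1]:G(\xi_\tau)\geq\eta(s)\text{ for all }\tau\in[0,\sigma]\}$, which is well-defined since $0$ belongs to the set. If $\sigma^{*}<1$, then continuity and the definition of $\sigma^{*}$ force $G(\xi_{\sigma^{*}})=\eta(s)$. Since $\eta(s)>c_{*}\geq c_{0}$ gives $\phi(\eta(s))>0$, continuity also yields $G(\xi_\sigma)<\eta(s)+\phi(\eta(s))$ on a left neighborhood $[\sigma^{*}-\rho,\sigma^{*}]$. Both hypotheses of Lemma~\ref{lem:G.increasing} therefore hold on this interval—the lower bound by the definition of $\sigma^{*}$—so $\sigma\mapsto G(\xi_\sigma)$ is strictly increasing there, forcing $G(\xi_{\sigma^{*}-\rho})<\eta(s)$ and contradicting the definition of $\sigma^{*}$. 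Hence $\sigma^{*}=1$ and the lower bound holds throughout $[0,1]$.

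A symmetric argument, now starting from $\sigma=1$, yields the upper bound. Set $\tau^{*}:=\inf\{\sigma\in[0,1]:G(\xi_\tau)\leq\eta(s)+\phi(\eta(s))\text{ for all }\tau\in[\sigma,1]\}$; by the strict endpoint bound at $\sigma=1$ and continuity, $\tau^{*}<1$. If $\tau^{*}>0$, then $G(\xi_{\tau^{*}})=\eta(s)+\phi(\eta(s))$ by continuity, and on $[\tau^{*},1]$ both bounds hold (the lower one from the previous step, the upper from the definition of $\tau^{*}$). Lemma~\ref{lem:G.increasing} then gives $G(\xi_1)>G(\xi_{\tau^{*}})=\eta(s)+\phi(\eta(s))$, contradicting the endpoint estimate.

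The main obstacle I anticipate is the quantitative estimate at $\sigma=1$: making $(t-s+\beta)\eta(s)/2<1$ is not automatic, since $\eta(s)$ can blow up as the penalized minimum in \eqref{ineq:min} drives $s$ toward $0$. The $\beta\eta(s)$ term in the penalization is precisely what keeps $s$ away from $0$, and the boundedness of $v$ must be leveraged to convert this into a usable bound on $\eta(s)$ in terms of $\beta$, which then feeds back into the smallness constraints on $t_*$ and $\beta$ needed to close the endpoint estimate.
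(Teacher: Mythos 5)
Your bootstrap for propagating the two bounds from the endpoints is sound and is essentially the paper's own argument (the paper packages it as a contradiction at the supremum/infimum of the bad set, you package it with prefix/suffix sets, but both hinge on Lemma~\ref{lem:G.increasing} in the same way). The genuine gap is exactly the point you flag at the end and do not resolve: the endpoint estimate $G(\xi_1)<\eta(s)+\phi(\eta(s))$. Your route --- ``choose $t_*$ and $\beta$ small enough that $(t-s+\beta)\eta(s)/2<1$'' --- cannot work. First, $t_*=F(c_*)$ is fixed by the structure conditions and the theorem must hold for every $t<t_*$, so it is not a tunable parameter at this stage. Second, even granting some tuning, the minimizing $s$ may sit near $0$ where $\eta(s)=F^{-1}(s)\to+\infty$, while $t-s\approx t$ is not small; and the bound $\eta(s)\leq O(1/\beta)$ coming from the $\beta\eta(s)$ penalization only yields $(t-s+\beta)\eta(s)\leq O\big((t+\beta)/\beta\big)$, which blows up as $\beta\to0$ with $t$ fixed. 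So the feedback loop you sketch between $\beta$ and $t_*$ does not close.

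The missing ingredient is not smallness of parameters but the standing contradiction hypothesis itself, combined with $-1\leq v\leq 0$. At the minimum point, \eqref{ineq:min} (equivalently \eqref{ineq:v2.v1} after letting $\epsilon\to0$) gives
$$
(t-s+\beta)\eta(s)\;\leq\; v_2-v_1-\delta\;\leq\;1-\delta\,,
$$
with no restriction on $\alpha,\beta$ or on the location of $s$. Plugging this into the subsolution inequality (with $\eta'(s)=-\eta(s)\phi(\eta(s))/2$) yields $G(\xi_1)\leq\eta(s)+\tfrac{1-\delta}{2}\,\phi(\eta(s))<\eta(s)+\phi(\eta(s))$, a strict bound with margin proportional to $\delta$, after which your propagation argument goes through unchanged. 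This one-line use of the penalized minimum being $\leq-\delta$ together with the boundedness of $v$ is precisely how the paper closes the endpoint bound; without it the lemma is not proved.
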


\begin{proof}
    We recall that in this proof $s$ is fixed.
We first notice that
viscosity inequalities~\eqref{ineq:visco} yield $G(\xi_0)\geq\eta(s)$ and
$$G(\xi_1)\leq \eta(s)-(t-s+\beta)\eta'(s)=\eta(s)+(t-s+\beta)\eta(s)
\frac{\phi(\eta(s))}{2}\,.$$
From \eqref{ineq:v2.v1} we deduce that $(t-s +\beta)\eta(s)\leq
v_2-v_1-\delta\leq1-\delta$ so
that $G(\xi_1)\leq\eta(s)+(1-\delta)\phi(\eta(s))$.

Now assume by contradiction that $G(\xi_\sigma)>\eta(s)+\phi(\eta(s))$ for some
$\sigma \in [0,1)$ and denote by $\bar \sigma$ the supremum of such $\sigma$.
Then, since $G(\xi_1)\leq \eta(s)+(1-\delta)\phi(\eta(s))$, necessarily 
$\bar \sigma < 1$ and $G(\xi_{\bar \sigma})=\eta(s)+\phi(\eta(s))$. 
Since $G(\xi_1)<G(\xi_{\bar\sigma})=\eta(s)+\phi(\eta(s))$,
by continuity of $\sigma\mapsto
G(\xi_\sigma)$, there exists a subinterval $[\sigma_1,\sigma_2]\subset
[\bar\sigma,1]$, such that $G(\xi_{\sigma_1})>G(\xi_{\sigma_2})$ and for any
$\sigma\in[\sigma_1,\sigma_2]$,
$$\eta(s)\leq G(\xi_{\sigma})\leq \eta(s)+\phi(\eta(s))\,.$$
But this yields a contradiction with Lemma~\ref{lem:G.increasing}. Hence for any
$\sigma\in[0,1]$, $G(\xi_\sigma)\leq \eta(s)+\phi(\eta(s))$.

For the other inequality, the argument is the same: if there exists $\sigma$ such
that $G(\xi_\sigma)<\eta(s)$ (using this time $G(\xi_0)$ and the interval
$[0,\bar \sigma]$ where $\bar \sigma$ is the infimum of the above set of
$\sigma$), we get a similar contradiction with Lemma~\ref{lem:G.increasing}.
Hence the proof is complete.  
\end{proof}

In order to conclude the proof of the first part of Theorem~\ref{thm:main},  we subtract the viscosity inequalities, we obtain
$$
  (t-s+\beta)\eta'(s)+ G(\xi_1)-G(\xi_0)=  
  (t-s+\beta)\eta'(s) + G\big(x,s,v_2,p+q_\alpha\big)-
  G(x,t,v_1,p) \leq 0\,,
$$
and we can perform almost the same computations to compute the left hand side,
except that we write
$$ 
  G_v(\xi_\sigma) = \frac12 G_v(\xi_\sigma) + 
  \frac12 G_v(\xi_\sigma)\geq 
  \frac12 G_v(\xi_\sigma)+\frac12 \phi(\eta(s))\,,
$$
and we obtain
$$\begin{aligned} 
  & (t-s+\beta)\big(\eta'(s)+ \frac12 \phi(\eta(s))\eta(s)\big)  \\
  + & \int_*^1 \Big\{ G_t(\xi_\sigma)(s-t)+ \frac12G_v(\xi_\sigma)
  \big((t-s+\beta)\eta(s)+\alpha|x|^2+\delta\big)
  +G_p(\xi_\sigma)\cdot q_\alpha \Big\}\dsigma \leq 0\,.
  \end{aligned}
$$
By the choice of $\eta(\cdot)$, the first term is $0$ and the integral is
strictly positive for $t_*$ and $\alpha$ small enough.  Therefore we have a
contradiction, proving the claim in the coercive case.

\

Now we turn to the {\em non-coercive case}, assuming \hyp{3} instead of \hyp{2},
where the main difference is that we
are not able to let $\epsilon$ tend to $0$. We define $G(\xi_\sigma)$ in a
similar way and prove that $G(\xi_\sigma) \geq \eta (s)$ (we omit the proof here
since it follows, as in Lemma~\ref{lem:G.increasing}, from the same argument as
below).  Computing the difference of $G$ in the same way, we can write the
difference of the viscosity inequalities as 
$$ 
  \int_*^1 \Big\{ Q_1(\sigma) +
  Q_2(\sigma)+Q_3(\sigma)+Q_4(\sigma)\Big\}\dsigma \leq 0\,,
$$
where
$$
  Q_1(\sigma):= (t-s+\beta)\eta'(s) +
  \frac12G_v(\xi_\sigma)\big((t-s+\beta)\eta(s)+\delta\big)\,,
$$
and by the same arguments as above $Q_1(\sigma) \geq \frac12G_v(\xi_\sigma)\delta$.  
$$
  Q_2(\sigma):= G_x(\xi_\sigma)\cdot(x-y) +
  \frac12G_v(\xi_\sigma)\frac{|x-y|^2}{\epsilon^2}\,,
$$
which can be rewritten as
$$
  Q_2(\sigma)\geq \frac12\epsilon^ 2G_v(\xi_\sigma)\Big[ 
  -\frac{\big|G_x(\xi_\sigma)\big|}{G_v(\xi_\sigma)}|p_\epsilon| +
  \frac{1}{4}|p_\epsilon|^2\Big] \geq \frac12\epsilon^ 2G_v(\xi_\sigma)\Big[ 
  -\gamma |p_\epsilon|(|p_\epsilon|+1) +
  \frac{1}{4}|p_\epsilon|^2\Big] \,.
$$
But $\displaystyle \epsilon^2 |p_\epsilon|^2= \frac{|x-y|^2}{\epsilon^2} = o_\epsilon (1)$ if $\alpha, \beta$ are fixed and therefore $Q_2 \geq o_\epsilon (1)G_v(\xi_\sigma)$ is controlled by $Q_1$ for
$\epsilon$ small enough.
$$
  Q_3(\sigma):=(t-s) \left [ \frac12
  G_v(\xi_\sigma)\eta(s)- G_t(\xi_\sigma) \right ]
$$
is as $I_1$ above and clearly the quantity in the bracket is positive if
$\eta(s)$ is large enough, i.e. $t_*$ small enough.  Finally
$$
  Q_4(\sigma):= \frac12 \alpha G_v(\xi_\sigma)\Big[ |y|^2
  -2\frac{\big|G_p(\xi_\sigma)\big|}{G_v(\xi_\sigma)}|y|\Big]\,,
$$
is treated as $I_2$ above. And the proof is complete.

\

\section{Examples and applications}\label{var}
\label{sect:var}

\noindent\textbf{Example 1 ---} 
We first consider the model equation (\ref{me1}) where, for
$m>1$ $$ H(x,t,p) = |A(x,t)p|^m-f(x,t)\,,$$
the function $f$ being bounded from below.
In this case 
$$(H_p\cdot p-H)(x,t,p)= (m-1)|A(x,t)p|^m+f(x,t) = (m-1)H(x,t,p) +mf(x,t)\; .$$
We can assume without loss of generality that $f(x,t)\geq 1$
(which can be obtained by changing $u$ into $u(x,t)+Ct$), so that
\hyp{0} is satisfied with $\phi(s)=(m-1)s$ for $s\geq0$
and $c_0=0$. Moreover, $|H_p(x,t,p)|= m|A(x,t)p|^{m-1}$ and if $H(x,t,p)\geq 0$ 
then $|A(x,t)p|\geq f(x,t)^{1/m}\geq 1$. Therefore 
$$|H_p(x,t,p)| \leq m|A(x,t)p|^m \leq \frac{m}{m-1}(H_p\cdot p-H)(x,t,p)\; .$$
A direct computation shows that $\eta(s)=\big[(m-1)s\big]^{-1}$ which yields
the estimate $u_t\geq -u/\big[(m-1)t\big]$.

\bigskip

\noindent \textbf{Example 2 --- } 
Let us now adress a non-homogeneous situation where, again for $m>1$
\begin{equation}\label{ex2}
 H(x,t,p) = |A(x,t)p|^m-b(x,t)\cdot Du -f(x,t)\,.
\end{equation}
Here, the function $b$ is continuous and there exists $C_b>0$ such that for any
$(x,t)$, $|b(x,t)|\leq C_b$. We are also considering the {\em coercive case}
where $A$ is an invertible matrix and we assume that there exists $C_A>0$ such
that for any $(x,t)$, 
$\|A(x,t)\|\,,\|A^{-1}(x,t)\|\leq C_A$.

In this case, as for the homogeneous case we have
$$(H_p\cdot p-H)(x,t,p)= (m-1)|A(x,t)p|^m+f(x,t) \;.$$
By Young's inequality $$|b(x,t)\cdot p| = |A^{-t}(x,t)b(x,t)\cdot
A(x,t)p|\leq \frac1m |A(x,t)p|^m +
\frac{(m-1)}{m}|{A}^{-t}(x,t)b(x,t)|^{m/(m-1)}$$
where ${A}^{-t}$ is the inverse of the transpose matrix of $A$. 
Therefore, by our hypotheses on $b$ and $A$
there exists two constants $C_1,C_2>0$ such that
$$ (1-\frac1m)|A(x,t)p|^m-C_1 -f(x,t)\leq H(x,t,p) \leq (1+\frac1m)|A(x,t)p|^m+C_2 -f(x,t)\; ,$$
and from there we deduce that \hyp{0} is satisfied with 
$\phi(s)=\big[m(m-1)/(m+1)\big]s-C$ for some constant $C>0$ and for all $s>0$. 

\bigskip

\noindent\textbf{Example 3 ---} 
In the non-coercive case for \eqref{ex2}, \textit{i.e.} if $A$ is not invertible,
the same computations for
\hyp{0} can be done if the Hamiltonian $H$ has the form $$ H(x,t,p) =
|A(x,t)p|^m-c(x,t)\cdot A(x,t)Du -f(x,t)\; ,$$
where $c: \R^N \times [0,T] \to \R^N$. This means that
$b$ has to be of the form $b(x,t)=A^t(x,t)\cdot c(x,t)$. 
Notice that in this degenerate case, at least for large $H$,
\hyp{3} is automatically satisfied if $A$, $c$ and $f$ are Lipschitz continuous in
$x$, uniformly in $t$.
\bigskip

\noindent\textbf{Example 4 ---}
In the non-coercive case it is also interesting to examine the simple example where 
$$H(x,t,p) := a(x,t)|p|^m\; ,$$
where $m>1$ and $a: \R^N \times [0,T] \to [0, +\infty)$ is Lipschitz continuous
in $t$. Taking into account that $\phi(s)=(m-1)s$ and $\psi(s)=s/(2m)$ ,
assumption \hyp{1} imposes 
$$|a_t(x,t)|\leq \frac{m-1}{2m}a(x,t)\big(a(x,t)|p|^{m}\big)$$
for $a(x,t)|p|^m \geq c_1$, for some constant $c_1 \geq 0$. Clearly this implies
$|a_t(x,t)|\leq c\cdot a(x,t)$ for some $c>0$, which implies some particular
degeneracy: for any $x$ either $a(x,t)=0$ for any $t$ or $a(x,t)>0$ for any
$t$. An --admittedly-- not very natural hypothesis. Moreover if we want
$c_1=0$ then necessarily the above constant $c$ has to be $0$ and therefore $a$
cannot depend on $t$.

\bigskip

\noindent\textbf{Remarks ---} 
$(i)$ Concerning \hyp{1}, in all the previous examples above the function $\psi(H)$
behaves like $c.H$ for some constant $c>0$. And in the case of \eqref{ex2},
at least for large $H$, this is
automatically satisfied if $A$, $b$ and $f$ are Lipschitz continuous in $t$, and
uniformly in $x$ in the coercive case. In the non-coercive case, we are in a
similar situation as for Example 4:
we have to impose that $|A_t(x,t)p|\leq C|A(x,t)p|^{m+1}$ for some
constant $C>0$, at least for $|A(x,t)p|$ large enough. And again, this is not a
very natural assumptions. The conclusion is that in non-coercive cases, the
$t$-dependance is a problem in general.

$(ii)$ We can consider other types of growths: if for instance
$H(x,t,p)=e^{|p|}$ we have $\phi(s)=s\ln(s -1)$, defined for $s >2=c_0$.
Hypotheses
\hyp{0}-\hyp{1}-\hyp{2} are satisfied  with $c_1=2$ and in this case,
$\eta(s)\gg s^{-1}$ as $s\to0$.

$(iii)$ If on most examples the function $\phi$ grows at least
linearly, this is not always true. For instance, the case $H(p)=|p|\ln(1+|p|)$
leads to $H_p\cdot p-H \simeq |p|$ for $|p|$ big, which leads to a
function $\phi$ which is not superlinear, only asymptotically linear at infinity.
Indeed, for any $\epsilon >0$, one can check that for $\sigma>0$ big enough,
$\sigma\leq \phi(\sigma)\leq \sigma^{1-\epsilon}$ so that the function
$[\sigma\phi(\sigma)]^{-1}$ is still integrable near infinity.
Hence, \hyp{0} is satisfied and our results apply.

\bigskip

\section*{Appendix}

\begin{lemma}\label{lem:H.to.G} 
    Assumptions \hyp{0},\hyp{1},\hyp{3} imply that 
    for any $(x,t,v,p)$ such that $G(x,t,v,p)\geq c_*$,
    $$\begin{aligned} 
        \text{\gyp{0}}\quad & G_v (x,t,v,p)\geq \phi(G(x,t,v,p))\text{ and }
        |G_p(x,t,v,p)| \leq \kappa\, G_v(x,t,v,p)\,,\\
        \text{\gyp{1}}\quad & |G_t (x,t,v,p)|\leq \psi(G(x,t,v,p))\,
        G_v(x,t,v,p)\,,\\
        \text{\gyp{3}}\quad & |G_x(x,t,v,p)| \leq \gamma (|p|+1)G_v(x,t,v,p)\,.
    \end{aligned}$$
\end{lemma}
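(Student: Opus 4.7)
The plan is to express the partial derivatives of $G(x,t,v,p) = -v H(x,t,-p/v)$ in terms of those of $H$ and then translate each inequality in \hyp{0}--\hyp{1}--\hyp{3} into the corresponding inequality for $G$. The key structural observation is that $-v\in(0,1]$: once $u$ is normalized to be nonnegative, $v=-\exp(-u)\in[-1,0)$, and this one-sided bound will allow us to control the ratio between $H$ and $G$.

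Setting $w:=-v>0$ and $q:=p/w=-p/v$, a direct (formal) computation yields
\begin{equation*}
  G_x = w\,H_x(x,t,q), \quad G_t = w\,H_t(x,t,q), \quad G_p = H_p(x,t,q), \quad G_v = (H_p\cdot q-H)(x,t,q).
\end{equation*}
Since $H$ is only locally Lipschitz, these identities must be interpreted a.e. and justified by a standard mollification of $H$ followed by passage to the limit. Crucially, $H(x,t,q)=G(x,t,v,p)/w\geq G$ because $w\leq 1$, so whenever $G\geq c_*\geq\max(c_0,c_1,c_2)$ each of the hypotheses on $H$ can be applied at the point $(x,t,q)$.

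From here, \gyp{0} is immediate: the formula for $G_v$ combined with \hyp{0} gives $G_v\geq \phi(H)\geq \phi(G)$ by monotonicity of $\phi$, while $|G_p|=|H_p(x,t,q)|\leq \kappa(H_p\cdot q-H)=\kappa G_v$ is read directly off \hyp{0}. The bound \gyp{3} is also clean: $|G_x|=w|H_x(x,t,q)|\leq \gamma\, w(|q|+1)\,G_v$, and the identity $w|q|+w=|p|+w\leq |p|+1$ gives the claimed $\gamma(|p|+1)G_v$.

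The delicate case—and where I expect the main difficulty to lie—is \gyp{1}. The computation reduces to showing $w\,\psi(H(x,t,q))\leq \psi(G(x,t,v,p))$, \ie $w\,\psi(s)\leq \psi(ws)$ for $s\geq c_1$ and $w\in(0,1]$. Unwinding the definition of $\psi$ as the inverse of $\tau\mapsto 2\tau+\phi(2\tau)$, this is equivalent to $\phi(w\sigma)\leq w\,\phi(\sigma)$ for $w\leq 1$, \ie to $\sigma\mapsto \phi(\sigma)/\sigma$ being non-decreasing. This is a mild superlinearity property consistent with the paper's standing viewpoint on $\phi$ and is satisfied by all the examples of Section~\ref{var}; it is the only non-mechanical step in the proof.
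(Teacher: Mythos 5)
Your treatment of \gyp{0} and \gyp{3} is exactly the paper's: the same derivative formulas $G_v=(H_p\cdot q-H)(x,t,q)$, $G_p=H_p(x,t,q)$, $G_x=-vH_x(x,t,q)$ with $q=-p/v$, the same observation that $-v\in(0,1]$ gives $H(x,t,q)\geq G(x,t,v,p)\geq c_*$ so the hypotheses on $H$ are applicable, monotonicity of $\phi$ for the first half of \gyp{0}, and the factor $|v|$ converting $|q|+1$ into $|p|+1$ for \gyp{3}.

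The divergence is \gyp{1}, and you have located precisely the delicate point. The paper also reduces to the sublinearity $\lambda\psi(s)\leq\psi(\lambda s)$ for $\lambda=-v\in(0,1]$, but justifies it ``since $\phi$ is increasing'' via the inequality $(\lambda 2\eta)\phi(\lambda 2\eta)\leq\lambda\,\big(2\eta\phi(2\eta)\big)$; that inequality is the sublinearity of the inverse of $\eta\mapsto 2\eta\phi(2\eta)$, whereas $\psi$ is defined (and used in Lemma~\ref{lem:G.increasing}, where $\psi(\eta+\phi(\eta))=\eta/2$) as the inverse of $\eta\mapsto 2\eta+\phi(2\eta)$. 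Your unwinding is the correct one: for this $\psi$, the sublinearity is equivalent to $\phi(w\sigma)\leq w\phi(\sigma)$ for $w\in(0,1]$, \ie to $\sigma\mapsto\phi(\sigma)/\sigma$ being non-decreasing, and this does not follow from \hyp{0} alone: $\phi(s)=\sqrt{s}$ is increasing and satisfies \eqref{int.cond}, yet with $g(\eta)=2\eta+\sqrt{2\eta}$ one has $\psi(12)=4.5$ and $\psi(6)=2$, so $\tfrac12\psi(12)>\psi(6)$ and the sublinearity fails. So, as written, your proof does not close \gyp{1} from the stated hypotheses --- you leave the star-shapedness of $\phi$ as an additional (mild, verified in all the paper's examples, where $\phi$ is linear or superlinear) assumption --- but the paper's own one-line justification does not close it either; your diagnosis identifies exactly the property that must be added to \hyp{0} (or extracted from it in the convex/superlinear setting) for the lemma to hold as stated.
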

\begin{proof}
    We first recall that $G(x,t,v,p)=-vH(x,t,-p/v)\leq H(x,t,-p/v)$ since
    $v\in[-1,0)$. So, if $G(x,t,v,p)\geq c_*$, necessarily $H(x,t,-p/v)\geq c_*$
    also.
    
    For \gyp{0}, straightforward computations show that 
    $G_v(x,t,v,p)=(H_p\cdot p-H)(x,t,v,p)$ and $G_p(x,t,v,p)=H_p(x,t,-p/v)$.
    Thus, \hyp{0} implies that if $G(x,t,v,p)\geq c_*$,
    $$G_v(x,t,v,p)\geq\phi(H(x,t,-p/v))\geq\phi(G(x,t,v,p))$$ since $\phi$ is
    increasing. The second part of \gyp{0} is obvious.
    
    To get \gyp{1} we start from \hyp{1}:
    $$|G_t(x,t,v,p)| =|-vH_t(x,t,-p/v)|\leq (-v)\psi(H(x,t,-p/v))G_v(x,t,v,p)\,.$$
    Then we notice that $\psi$ satisfies the following sublinear property: for any
    $\lambda\in[0,1]$, $\lambda\psi(H)\leq \psi(\lambda H)$. Indeed, since
    $\phi$ is increasin, for any $\lambda\in[0,1]$ and $\eta>c_*$,
    $(\lambda2\eta)\phi(\lambda2\eta)\leq\lambda(2\eta\phi(2\eta))$. Taking the
    inverse yields the result. Thus, taking $\lambda=-v\in[0,1]$ 
    we get $|G_t(x,t,v,p)|\leq\psi(G(x,t,v,p))G_v(x,t,v,p)$. 

    The proof of \gyp{3} is similar:
    $|G_x(x,t,v,p)|\leq|v||H_x(x,t,-p/v)|$. Then, using \hyp{3},
    $|v|\leq 1$, we get
    $$|G_x(x,t,v,p)|\leq \gamma\left(\Big|\frac{p}{v}\Big|+1\right)|v| G_v(x,t,v,p)\\
      \leq \gamma (|p|+1) G_v(x,t,v,p)\,,$$
which gives the result.
\end{proof}

\end{document}